\documentclass[12pt]{amsart} 
\usepackage{amsmath, amssymb, amsthm, bm, mathtools, enumitem, caption}
\usepackage{hyperref}
\usepackage{enumitem}
\usepackage{float}
\usepackage{listings}
\usepackage{xcolor}
\definecolor{codegray}{gray}{0.95}
\lstset{backgroundcolor=\color{codegray}, breaklines=true, columns=fullflexible, basicstyle=\ttfamily\small, frame=single, framerule=0pt, xleftmargin=1em, xrightmargin=1em, showstringspaces=false, keepspaces=true}

\usepackage[noabbrev,capitalize]{cleveref}
\usepackage{adjustbox}
\crefname{equation}{}{}
\usepackage{ytableau}
\usepackage{graphicx, tikz}
\usepackage[textheight=8.80in, textwidth=6.85in]{geometry}

\usepackage{microtype}
\usepackage{bbm}



\newtheorem{theorem}{Theorem}
\newtheorem{lemma}[theorem]{Lemma}
\newtheorem{corollary}[theorem]{Corollary}

\newtheorem*{conjecture*}{Conjecture}

\theoremstyle{definition}

\theoremstyle{remark}
\newtheorem*{remark}{Remark}

\newtheorem*{example}{Example}

\numberwithin{equation}{section}



\usepackage{bm} 

\DeclareMathOperator{\lcm}{lcm}
















\title[Partition functions that repel perfect-powers]{Partition functions that repel perfect-powers}

\thanks{2020 {\it{Mathematics Subject Classification.}} 05A17, 05A20, 11P82}
\keywords{partition functions, perfect powers}

\author{Ken Ono}

\address{Dept. of Mathematics, University of Virginia, Charlottesville, VA 22904, USA}
\email{ko5wk@virginia.edu}

\begin{document}
\maketitle

\begin{abstract}
A conjecture by Sun states that the partition function $p(n)$, for $n>1$, is never a perfect power. Recent work by Merca et al. proposes generalizations of perfect-power repulsion for $p(n)$.  In this note, we prove these generalizations for the functions $p_B(n)$, which count the number of partitions of $n$ with the largest part $\leq B$. If $B\geq 4$ and $k\geq 3$, with $k\nmid (B-1)$, then we prove that there are only finitely many pairs $(n,m)$ for which
$$\lvert p_B(n)-m^k\rvert\le d.$$ These results support Sun and Merca et al.'s conjectures, as $p_B(n) \rightarrow p(n)$ when $B \rightarrow +\infty.$ To prove this, we reduce the problem to Siegel's Theorem, which guarantees  the finiteness of integral points on curves with genus $\geq 1$.
\end{abstract}

\section{Introduction and Statement of Results}
The distribution of perfect powers is a classic topic in number theory. A milestone is Catalan's conjecture (1844), proven by Mih\u{a}ilescu in 2002 \cite{Mihailescu2004}, which asserts that the Diophantine equation
\[
x^{a}-y^{b}=1 \qquad (x,y,a,b>1)
\]
only has the single solution
$(x,y,a,b)=(3,2,2,3),$
and so the only consecutive perfect powers are $8$ and $9$. Building on this classical narrative, attention has shifted from perfect powers to their relationship with number theoretic functions. Sun \cite{Sun2018, Sun2021} proposed that partition numbers $p(n)$ where $n>1$, are never perfect-powers. This has led to further developments by Merca et al.  \cite{MercaOnoTsai2025}.

We recall these conjectures. Throughout, $p(n)$ denotes the number of (unrestricted) integer partitions of $n$, where $p(0)=1$. The generating function for this function is given by the Euler product
\begin{equation}\label{GenFunction}
P(q)=\sum_{n=0}^{\infty} p(n)q^n=\prod_{n=1}^{\infty}\frac{1}{1-q^n}.
\end{equation}
An integer is a \emph{perfect power} if it has the form $m^k$ with integers $m\ge 2$ and $k\ge2$.
For a fixed exponent $k\ge2$, we define
\begin{equation}\label{Delta}
\Delta_k(n)\ :=\ \min_{m\ge0}\,|\,p(n)-m^k\,|
\end{equation}
for the distance from $p(n)$ to the nearest $k$th power.  The works of Sun \cite{Sun2018, Sun2021} and Merca et al. \cite{MercaOnoTsai2025} concern the following conjecture.

\begin{conjecture*}[Sun, Merca et al.]
The partition function $p(n)$ repels perfect-powers.
\begin{enumerate}[label=\textnormal{(\roman*)}, leftmargin=1.5em]
  \item \textit{(No perfect-powers)} For every $n\ge2$ and $k\ge2$, $p(n)\neq m^k$ for all integers $m\ge 2$.
  \item \textit{(Perfect-power Repulsion)} For $k\ge2$ and $d\ge0$, at most finitely many $n$ satisfy $\Delta_k(n)\le d$.
\end{enumerate}
\end{conjecture*}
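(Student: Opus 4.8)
The plan is to approach the conjecture through the restricted partition functions $p_B(n)$, which are the objects one can actually control, and then attempt to pass to the limit $B\to\infty$. The guiding observation is that for each fixed $n$ one has $p_B(n)=p(n)$ as soon as $B\ge n$, so the unrestricted function is recovered pointwise from its truncations. The finiteness statements available for $p_B$ rest on the fact that, for fixed $B$, the map $n\mapsto p_B(n)$ is a quasi-polynomial of degree $B-1$; this turns each equation $p_B(n)=m^k$, and more generally the inequality $\lvert p_B(n)-m^k\rvert\le d$, into the search for integral points on a fixed curve $y^k=Q(x)$, whereupon Siegel's Theorem supplies finiteness once the genus is positive. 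The hope is to feed these per-$B$ results into a single argument for $p(n)$.

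First I would make the analytic input precise. By Hardy--Ramanujan--Rademacher one has $\log p(n)\sim \pi\sqrt{2n/3}$, so if $p(n)=m^k$ then $k\log m\sim \pi\sqrt{2n/3}$, and the spacing of consecutive $k$th powers near $p(n)$ is of size $\approx k\,p(n)^{1-1/k}$, which outgrows any fixed window $d$. This yields the correct heuristic, namely that the set of $n$ with $\Delta_k(n)\le d$ should have density zero; but density zero is not finiteness, so the asymptotics alone close neither part (i) nor part (ii).

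Next I would try to couple the two ingredients by allowing $B=B(n)$ to grow with $n$. Writing $p(n)=p_B(n)+\bigl(\text{partitions of }n\text{ using a part}>B\bigr)$, one would take $B$ just below $n$ so that the quasi-polynomial skeleton still dominates while the tail stays controlled, and then invoke the fixed-$B$ finiteness along this diagonal. The main obstacle, and the reason the statement remains a conjecture rather than a theorem, surfaces at exactly this point: the Siegel reduction requires a single fixed curve, whereas letting $B\to\infty$ produces a family $\{\,y^k=Q_B(x)\,\}_B$ of unbounded degree and genus. Siegel's Theorem is finite but non-uniform across such a family, so a diagonal choice $B=B(n)$ inherits no single finite bound, and the count of admissible $n$ escapes control. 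It is this uniformity barrier that I expect to be the decisive difficulty.

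Closing the gap would therefore require either a uniform, ideally effective, Siegel-type bound over the family $\{\,y^k=Q_B(x)\,\}_B$, the sort of uniformity not presently available, or else a genuinely different arithmetic input. For part (i) the most promising alternative route is $\ell$-adic: using the congruence theory of $p(n)$ (Ramanujan congruences and the modular results of Ahlgren--Ono, together with the known densities of $p(n)$ in residue classes), one would try to exhibit, for each $k$, a prime $\ell$ for which $v_\ell\!\bigl(p(n)\bigr)$ fails to be divisible by $k$, directly forbidding $p(n)$ from being a perfect $k$th power. Even this, however, would have to be made uniform in $k$ to recover the full statement, and it is precisely the absence of such uniform control, rather than any single equation, that I anticipate to be the crux.
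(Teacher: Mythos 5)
The statement you were asked to prove is presented in the paper as a \emph{conjecture}, attributed to Sun and Merca et al.; the paper contains no proof of it, and your proposal --- correctly --- does not claim one either. What the paper actually proves (Theorem~\ref{thm:main}) is precisely the fixed-$B$ analog you sketch: for $B\ge 4$ and $k\ge 3$ with $k\nmid(B-1)$, the quasipolynomial structure of $p_B$ along residue classes modulo $L=\lcm(1,\dots,B)$ (Lemma~\ref{lem:eqp}) reduces $\lvert p_B(n)-m^k\rvert\le d$ to integral points on finitely many superelliptic curves $b_0Y^k=Q_e(X)-t$, with Siegel's theorem handling the case of at least three distinct roots and Thue/Thue--Mahler finiteness handling the degenerate root configurations. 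Your diagnosis of the obstruction to passing from $p_B$ to $p$ --- that a diagonal choice $B=B(n)$ produces a family of curves of unbounded degree over which Siegel's theorem gives no uniform control --- is sound and matches the paper's own framing of Theorem~\ref{thm:main} as evidence for, rather than a proof of, the conjecture.

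Two refinements to your account. First, the per-$B$ finiteness you would feed into a diagonal argument is not available unconditionally, and the paper's examples show why: $p_2(n)=\lfloor n/2\rfloor+1$ takes \emph{every} perfect $k$th power value, and $p_3(n)$ is a square infinitely often in four residue classes modulo $6$ via Pell equations. Since the conjecture ranges over all $k\ge 2$, the case $k=2$ already defeats the truncation strategy at the source: the associated curves can have genus $0$, and finiteness is simply false for small $B$. This is an obstruction \emph{prior to}, and independent of, the non-uniformity of Siegel that you identify; the paper's hypotheses $k\ge 3$ and $k\nmid(B-1)$ exist exactly to exclude these genus-$0$ and power-type degenerations (the condition $k\nmid(B-1)$ handles the configuration $Q_e(X)-b_0t=c(X-a)^{B-1}$, where coprimality of the exponents makes the equation Thue-type). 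Any limit argument would therefore need a genuinely different mechanism for $k=2$, such as the $\ell$-adic route you propose for part (i). Second, your spacing heuristic --- gaps of size about $k\,p(n)^{1-1/k}$ between consecutive $k$th powers near $p(n)$ --- is correct but, as you note, yields only density zero, and nothing in the paper improves on this for $p(n)$ itself. In short: your proposal is an accurate map of the known territory and of the barrier, and it agrees with the paper that the statement remains open.
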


\begin{remark} We note that these questions are similar in flavor to the work of Tengely and Ulas \cite{TU} on equal values of special partition functions that have Diophantine interpretations.
\end{remark}

We prove exact analogs of the conjecture for the sequence of partition functions $p_B(n)$ that converges to $p(n)$. Specifically, for $B\geq 2,$ we consider the truncated Euler products
\begin{equation}\label{pBGenFunction}
P_B(q)=\sum_{n\ge0} p_B(n)\,q^n \;=\; \prod_{m=1}^{B}\frac{1}{1-q^{m}}.
\end{equation}
One sees that $p_B(n)$ is the number of partitions of $n$ whose largest part is at most $B$.
Moreover, we also have that $p_B(n)$ is the number of partitions of $n$ with at most $B$ parts by considering the usual Ferrers board conjugation.
Of course, we have $\lim_{B\rightarrow +\infty}p_B(n)=p(n).$
In analogy with $\Delta_k(n)$, we define
\begin{equation}
\Delta^{(B)}_k(n)\ :=\ \min_{m \geq 0}\bigl|\,p_B(n)-m^k\,\bigr|,
\end{equation}
which is distance between $p_B(n)$ and the nearest $k$th power.
We prove the following theorem that confirms perfect-power repulsion for these functions.

\begin{theorem}\label{thm:main}
If $B\geq 4$ and $k\geq 3$, with $k\nmid (B-1)$, then the following are true.
\begin{enumerate}[label=(\roman*), leftmargin=1.5em]
  \item 
  There are at most finitely many $n$ for which $p_B(n)=m^k$.
  \item 
  If $d\ge0$, then we have
  \[
 \# \{\,n\ge1:\ \Delta^{(B)}_k(n)\le d\,\} < +\infty.
  \]
 \end{enumerate}
\end{theorem}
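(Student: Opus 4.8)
The plan is to exploit the fact that $p_B(n)$ is a quasi-polynomial and to reduce each assertion to the finiteness of integral points on a family of superelliptic curves, to which Siegel's Theorem applies. First I would record the quasi-polynomial structure. Since $P_B(q)=\prod_{m=1}^{B}(1-q^m)^{-1}$ is a rational function whose poles lie only at roots of unity, a partial-fraction expansion shows that, on each residue class $n\equiv r \pmod{L}$ with $L:=\lcm(1,2,\dots,B)$, the coefficient $p_B(n)$ agrees with a polynomial $Q_r(n)\in\Q[n]$. The pole of order $B$ at $q=1$ supplies the top-degree part, so each $Q_r$ has degree exactly $B-1$ and leading coefficient $\tfrac{1}{B!\,(B-1)!}$; its subleading coefficients can be read off from the expansion of $P_B(e^{-t})$ near $t=0$.

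Next I would reduce both parts to a single statement about curves. For part (ii), the condition $\Delta^{(B)}_k(n)\le d$ means $p_B(n)=m^k+j$ for some integer $j$ with $\lvert j\rvert\le d$. Splitting the admissible $n$ into the $L$ residue classes and the shift into the finitely many values of $j$, it suffices to show that for each $r$ and each integer $j$ the equation
\[
 y^k \;=\; Q_r(x)-j
\]
has only finitely many solutions in integers $(x,y)$; part (i) is the special case $j=0$. Everything therefore reduces to the superelliptic curves $C_{r,j}\colon y^k=Q_r(x)-j$ of degree $B-1$.

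I would then invoke Siegel's Theorem: $C_{r,j}$ has only finitely many integral points as soon as its smooth model has genus $\ge 1$. Writing $Q_r(x)-j=a\prod_i (x-\alpha_i)^{e_i}$ with the $\alpha_i$ distinct, LeVeque's refinement of Siegel's Theorem shows the genus is positive unless $f:=Q_r(x)-j$ has one of a short list of degenerate shapes, the principal one being $f=a\,(x-\alpha)^{e}\,h(x)^k$ in which every root multiplicity but one is divisible by $k$. Here the hypotheses enter: $B\ge 4$ forces $\deg f=B-1\ge 3$, and $k\nmid(B-1)$ prevents $f$ from being a constant times a perfect $k$-th power, since that would force $k\mid \deg f$. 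Moreover $Q_r(x)-j$ is separable for all but finitely many $j$ — repeated roots occur only when $j$ is one of the at most $B-2$ critical values of $Q_r$ — and for those $j$ the standard squarefree genus formula gives positive genus; these shifts are settled at once.

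The main obstacle is the finite list of remaining shifts $j$ (the critical values of $Q_r$), for which $Q_r(x)-j$ acquires repeated roots and could in principle degenerate to the exceptional configuration $a(x-\alpha)^e h(x)^k$ with $\deg h\ge 1$, or to a pure power $a(x-\alpha)^{B-1}$. To exclude the pure-power case I would observe that $Q_r$ being a shifted perfect power constrains its top three coefficients to satisfy a fixed binomial relation, which the explicit coefficients from the $t\to 0$ expansion of $P_B(e^{-t})$ violate. For the general case I would bound the multiplicity structure directly: the repeated part is governed by $\gcd\!\big(Q_r-j,\,(Q_r-j)'\big)$, and a root of multiplicity divisible by $k\ge 3$ imposes strong high-contact constraints that I would contradict using the same low-order coefficients, together with the degree bookkeeping $B-1\equiv e \pmod k$ forced by $k\nmid(B-1)$. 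Ruling out that the non-$k$-divisible multiplicity of $Q_r(x)-j$ can concentrate at a single root is the crux of the argument; once this is established every $C_{r,j}$ has genus $\ge 1$ and Siegel's Theorem completes the proof.
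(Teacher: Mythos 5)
Your proposal follows the same architecture as the paper's proof: the partial-fraction/quasipolynomial structure $p_B(Ln+r)=Q_r(n)$ with $\deg Q_r=B-1$, the reduction of both parts to finiteness of integral points on the superelliptic curves $y^k=Q_r(x)-j$ over the finitely many residue classes $r$ and shifts $\lvert j\rvert\le d$, and Siegel's theorem in the generic case where $Q_r(x)-j$ is separable of degree $B-1\ge 3$. Up to that point the two arguments coincide.

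The gap is precisely at the step you yourself label ``the crux,'' and it is left unproven. For the finitely many critical shifts $j$ you must exclude the LeVeque-exceptional shapes $Q_r(x)-j=a(x-\alpha)^e h(x)^k$, including the pure-power case $\deg h=0$. This cannot be waved away: the hypothesis $k\nmid(B-1)$ only kills the full $k$th-power shape (by the degree count $k\deg R=B-1$), and if, say, $Q_r(x)-j=(x-\alpha)^{B-1}$ occurred with $\alpha\in\Z$, $j\in\Z$, then $n=\alpha+s^k$, $m=s^{B-1}$ would give infinitely many solutions and the theorem would be false for that class --- so a genuine argument specific to the partition polynomials is required here. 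Your sketch does not supply one. For the pure-power case, the ``top three coefficients'' relation is not determined by the expansion of $P_B(e^{-t})$ at $t=0$ alone when $B=4$: the wave coming from the pole at $q=-1$ is a polynomial of degree $\lfloor B/2\rfloor-1$, which for $B=4$ equals $B-3$, exactly the coefficient your binomial relation needs; so that case requires a separate, $r$-dependent verification you have not performed (for $B\ge 5$ the idea does work, since the secondary waves have degree $<B-3$). For the general exceptional shape with $\deg h\ge 1$ you offer no mechanism at all: ``high-contact constraints'' would amount to controlling $\gcd\bigl(Q_r-j,\,Q_r'\bigr)$, i.e.\ the multiplicity structure of the critical points of $Q_r$, and nothing in the top-coefficient data you invoke gives that.

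For comparison, the paper does not attempt to rule these shapes out by coefficient analysis. Instead, for an exceptional shift it runs a case analysis on the number $r_t$ of distinct roots of $Q_r(x)-j$: Siegel's theorem when $r_t\ge 3$, Thue--Mahler finiteness when $r_t=2$, and, when $r_t=1$ (the pure-power shape), the hypothesis $k\nmid(B-1)$ is invoked to treat $b_0m^k=c(n-a)^{B-1}$ as a Thue-type equation with finitely many solutions. So where your write-up openly defers the hard case, the paper dispatches it by citing Diophantine finiteness results for the degenerate curves; either way, this exceptional-shift analysis is the one place where the proof cannot be completed by the generic Siegel argument, and it is exactly the part missing from your proposal.
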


\begin{example} The partition function $p_2(n)$ does not repel perfect-powers. 
To see this, we
note that
\[
\sum_{n\ge0} p_2(n)\,q^n \;=\; \frac{1}{(1-q)(1-q^2)} 
\;=\; \Big(\sum_{a\ge0} q^{a}\Big)\Big(\sum_{b\ge0} q^{2b}\Big).
\]
Taking the Cauchy product gives
\[
\frac{1}{(1-q)(1-q^2)} 
= \sum_{n\ge0} \Big(\#\{(a,b)\in\mathbb{Z}_{\ge0}^2:\; a+2b=n\}\Big)\,q^n.
\]
For a fixed $n$, writing $n=a+2b$ with $a,b\ge0$ is the same as choosing $b=0,1,\dots,\lfloor n/2\rfloor$, 
and then setting $a=n-2b$. Therefore, we have that
\[
p_2(n)=\lfloor n/2\rfloor+1 \qquad (n\ge0).
\]
In particular, for any integer $m\ge1$ and any $k\ge2$, if we take
$n:=2(m^k-1),$
then 
\[
p_2(n)=\Big\lfloor \frac{2(m^k-1)}{2}\Big\rfloor + 1
= (m^k-1)+1
= m^k.
\]
Thus, $p_2(n)$ takes every perfect $k$th power.
\end{example}

\begin{example} If $B=3$, then we have
$$P_3(q)=\frac{1}{(1-q)(1-q^2)(1-q^3)}=\sum_{n\ge0}p_3(n)q^n,
$$
Along each residue class $n\equiv r\pmod 6,$ we have
\[
p_3(6t+r)=Q_r(t)\quad(t\in\mathbb{Z}_{\ge0}),
\]
where the six quadratic polynomials $Q_r\in\mathbb{Z}[t]$ are
\begin{align*}
Q_0(t)&=3t^2+3t+1, &
Q_1(t)&=3t^2+4t+1, &
Q_2(t)&=3t^2+5t+2,\\
Q_3(t)&=3t^2+6t+3, &
Q_4(t)&=3t^2+7t+4, &
Q_5(t)&=3t^2+8t+5.
\end{align*}

\medskip
For $r\in\{0,1,4,5\},$ there are infinitely many $t\ge0$ such that $Q_r(t)$ is a perfect square.
Consequently, $p_3(n)$ is a square for infinitely many $n$ in each of the residue classes
$r\in\{0,1,4,5\}\pmod6$.
Each diophantine equation $Q_r(t)=m^2$ reduces to a Pell-type equation in $(x,m)$ with
discriminant $12$ after completing the square in $t$.

\smallskip
\noindent
\emph{(Case $r=0$)} From $3t^2+3t+1=m^2$,
\[
(6t+3)^2-12m^2=-3.
\]
This is the negative Pell equation $x^2-12m^2=-3$ (with $x=6t+3$), which has infinitely many
integer solutions. For instance, starting with $(x,m)=(3,1)$ and multiplying by
the fundamental unit $7+2\sqrt{12}$ yields an infinite family. The first few $t$ are
$0,7,104,1455,\ldots$, giving 
$$p_3(6t)\in\{1,13^2,181^2,2521^2,\ldots\}.
$$

\smallskip
\noindent
\emph{(Case $r=1$)} From $3t^2+4t+1=m^2$,
\[
(6t+4)^2-12m^2=4.
\]
The Pell-type equation $x^2-12m^2=4$ has infinitely many solutions; e.g. $t=0,8,120,1680,\ldots$
produce $p_3(6t+1)=1,15^2,209^2,2911^2,\ldots$.

\smallskip
\noindent
\emph{(Case  $r=4$)} From $3t^2+7t+4=m^2$,
\[
(6t+7)^2-12m^2=1,
\]
the (positive) Pell equation with fundamental solution $(x,m)=(7,2)$, hence infinitely many
solutions. We obtain $t=0,15,224,\ldots$ and $p_3(6t+4)=2^2,28^2,390^2,\ldots$.

\smallskip
\noindent
\emph{(Case $r=5$)} From $3t^2+8t+5=m^2$,
\[
(6t+8)^2-12m^2=4,
\]
again yielding infinitely many solutions; e.g. $t=1,31,449,\ldots$ and
$p_3(6t+5)=4^2,56^2,780^2,\ldots$.

\smallskip
In each case the corresponding Pell or Pell-type equation has infinitely many solutions because
the unit group of $\mathbb{Z}[\sqrt{12}]$ is infinite; iterating by the fundamental unit
$7+2\sqrt{12}$ produces an infinite family. Therefore $p_3(n)$ assumes square values infinitely
often in the residue classes $n\equiv 0,1,4,5\pmod6$.
\end{example}

\medskip
To prove Theorem~\ref{thm:main}, we use the fact that the partition functions $p_B(n)$ are quasipolynomial for large $n$, meaning each behaves like fixed polynomials along specific arithmetic progressions. This regularity allows us to reduce the conjectures to curves with genus $\geq 1,$ which then allows us to appeal to Siegel's Theorem.
Summing over these progressions implies Theorem~\ref{thm:main}.

\section*{Acknowledgements}
 \noindent 
The author thanks Mircea Merca, Wei-Lun Tsai, and Maciej Ulas for their comments on an earlier version of this note.
The author thanks the Thomas Jefferson Fund,  the NSF
(DMS-2002265 and DMS-2055118) and the Simons Foundation (SFI-MPS-TSM-00013279) for their generous support. The author is an Honorary Member of the Academy of Romanian Scientists.

\section{Nuts and bolts and the proof of Theorem~\ref{thm:main}}\label{sec:2}

This section gathers the necessary structural inputs for our partition function perfect-power repulsion theorem and then demonstrates Theorem~\ref{thm:main}. Subsection~\ref{sec:2.1} records the quasipolynomial description of $p_B(n)$ on residue classes, its growth, discrete spacing, and includes two Diophantine lemmas that manage the proximity to perfect $k$th powers. Subsection~\ref{sec:2.2} combines these tools to prove Theorem~\ref{thm:main}.

\subsection{Nuts and bolts}\label{sec:2.1}

We start with the quasipolynomial (QP) structure for the counting function $p_B(n)$ and the resulting growth and spacing on arithmetic progressions. Throughout,  we assume that $B\ge4$ and $k\ge3$ are fixed integers, where $k\nmid (B-1)$.

\begin{lemma}[QP structure, degree, and spacing]\label{lem:eqp}
If we let $L:=\lcm(1,2,\dots,B),$ then there are polynomials $Q_0,\dots,Q_{L-1}\in\mathbb{Q}[x]$ of the same degree $B-1$ with positive leading coefficients such that for all $n$ we have
\[
p_B(Ln+r)=Q_r(n)\qquad(0\le r<L).
\]
Moreover, for each $r$, one has
\begin{displaymath}
\begin{split}
&Q_r(n)=\alpha_r\,n^{B-1}+O(n^{B-2}) \qquad \quad(\alpha_r>0),\\
&Q_r(n+1)-Q_r(n)=(B-1)\alpha_r\,n^{B-2}+O(n^{B-3}).
\end{split}
\end{displaymath}
\end{lemma}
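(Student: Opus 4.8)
The plan is to extract $p_B(n)$ directly from the partial-fraction expansion of the rational function $P_B(q)$. First I would observe that $P_B(q)=\prod_{m=1}^{B}(1-q^m)^{-1}$ is a \emph{proper} rational function (numerator $1$, denominator of degree $B(B+1)/2$) whose poles all lie at roots of unity: a root of unity $\zeta$ of order $d$ is a pole precisely when $d\le B$, equivalently $d\mid L$, and its order is the number of $m\in\{1,\dots,B\}$ with $d\mid m$, namely $\lfloor B/d\rfloor$. In particular $q=1$ is the unique pole of maximal order $B$. Writing
\[
P_B(q)=\sum_{\zeta}\ \sum_{j=1}^{\lfloor B/\ord(\zeta)\rfloor}\frac{A_{\zeta,j}}{(1-q/\zeta)^{\,j}}
\]
and expanding via $(1-q/\zeta)^{-j}=\sum_{n\ge0}\binom{n+j-1}{j-1}\zeta^{-n}q^{n}$ yields the exact identity
\[
p_B(n)=\sum_{\zeta,\,j}A_{\zeta,j}\binom{n+j-1}{j-1}\zeta^{-n}\qquad(n\ge0),
\]
with no polynomial part because $P_B$ is proper.

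The key point is that $\binom{n+j-1}{j-1}$ is a polynomial in $n$ of degree $j-1$, while $\zeta^{-n}$ has period $\ord(\zeta)\mid L$. Fixing a residue $r$ and substituting $n=Ln'+r$, every factor $\zeta^{-n}$ collapses to the constant $\zeta^{-r}$ and every binomial becomes a polynomial in $n'$; collecting terms produces a polynomial $Q_r$ with $p_B(Ln'+r)=Q_r(n')$ for all $n'\ge0$. To see $Q_r\in\mathbb{Q}[x]$ despite the a priori complex data $A_{\zeta,j},\zeta$, I would invoke that $Q_r(n')=p_B(Ln'+r)$ is a nonnegative integer for every $n'\ge0$: a polynomial taking integer values on all nonnegative integers is a $\mathbb{Z}$-combination of the binomials $\binom{x}{k}$, hence lies in $\mathbb{Q}[x]$.

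For the degree and leading term I would isolate the dominant pole. For $\zeta\ne1$ one has $\ord(\zeta)\ge2$, so the largest index is $\lfloor B/\ord(\zeta)\rfloor\le B/2$ and such terms contribute degree $\le B/2-1\le B-2$; the only degree-$(B-1)$ contribution is the single term $\zeta=1,\ j=B$. Its leading coefficient in $n'$ is $A_{1,B}\,L^{B-1}/(B-1)!$, and factoring $1-q^m=(1-q)(1+q+\dots+q^{m-1})$ gives
\[
A_{1,B}=\lim_{q\to1}(1-q)^{B}P_B(q)=\prod_{m=1}^{B}\frac1m=\frac1{B!}.
\]
Hence $\alpha_r=L^{B-1}/\big((B-1)!\,B!\big)>0$ for every $r$ (in fact independent of $r$), so each $Q_r$ has degree exactly $B-1$ with positive leading coefficient. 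The spacing estimate is then the finite difference of a degree-$(B-1)$ polynomial: from $Q_r(x)=\alpha_r x^{B-1}+O(x^{B-2})$ the binomial theorem gives $Q_r(n+1)-Q_r(n)=(B-1)\alpha_r n^{B-2}+O(n^{B-3})$.

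The only non-formal steps are pinning down $Q_r\in\mathbb{Q}[x]$ from the complex partial-fraction data (handled by the integrality-plus-interpolation remark) and verifying that $q=1$ is the unique pole reaching order $B$, which is what forces the degree to be uniformly $B-1$ and the leading coefficient to come from the pole at $1$ alone. Everything else is routine coefficient bookkeeping.
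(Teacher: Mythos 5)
Your proof is correct and follows essentially the same route as the paper's: partial-fraction decomposition of $P_B(q)$ at roots of unity, coefficient extraction via $\binom{n+j-1}{j-1}$, identification of the unique order-$B$ pole at $q=1$ giving the leading coefficient $\alpha_r=L^{B-1}/\bigl(B!\,(B-1)!\bigr)$, and a one-term binomial expansion for the spacing. Your integer-valued-polynomial argument for $Q_r\in\mathbb{Q}[x]$ is in fact a careful patch of a point the paper glosses over (the partial-fraction data $A_{\zeta,j}$ are a priori complex), but the overall approach is the same.
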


\begin{proof}
Consider the  generating function (\ref{pBGenFunction})
\[
  P_B(q)=\sum_{n\ge 0}p_B(n)q^n=\prod_{m=1}^B\frac{1}{1-q^m},
\]
and let $L:=\mathrm{lcm}(1,2,\dots,B)$. All of the poles of $P_B(q)$ lie at the $L$th
roots of unity $\zeta$. The pole at $\zeta=1$ has order $B$, and every
other pole has order at most $B-1$. Therefore, we can write the (finite) partial fraction
decomposition
\[
  P_B(q)=\sum_{\zeta^L=1}\;\sum_{j=1}^{e(\zeta)}
  \frac{R_{\zeta,j}(q)}{(1-\zeta q)^j},
\]
where $R_{\zeta,j}(q)$ are polynomials and $e(\zeta)\le B$, with
$e(1)=B$. Extracting coefficients via
$$
  [q^N](1-\zeta q)^{-j}=\zeta^{\,N}\binom{N+j-1}{j-1},
$$  
we obtain
\[
  p_B(N)=\sum_{\zeta^L=1}\;\sum_{j=1}^{e(\zeta)} \zeta^{\,N}\,P_{\zeta,j}(N),
\]
where each $P_{\zeta,j}\in\mathbb Q[N]$ has degree at most $j-1$. Fixing a
residue class $N\equiv r\pmod L$, then we have that each $\zeta^{\,N}=\zeta^{\,r}$ is
constant, and we conclude that $p_B(Ln+r)$ agrees (for all $n\ge 0$) with
a polynomial $Q_r(n)\in\mathbb Q[n]$ of degree at most $B-1$.

The degree is exactly $B-1$ with positive leading coefficient, because the
only term contributing in top degree is the pole at $\zeta=1$ and $j=B$.
Near $q=1$, we have
\[
  \prod_{m=1}^B\frac{1}{1-q^m}
  \sim \frac{1}{\prod_{m=1}^B m}\cdot(1-q)^{-B}
  \qquad(q\to 1),
\]
so
\(
  [q^N](1-q)^{-B}=\binom{N+B-1}{B-1}
  \sim \dfrac{N^{B-1}}{(B-1)!}
\).
Therefore, the leading coefficient equals
$$
  \alpha_r = 
  \dfrac{L^{B-1}}{B!\,(B-1)!}>0.
$$
Finally, if $Q_r(n)=\alpha_r n^{B-1}+O(n^{B-2})$, then the discrete
difference satisfies
\[
  Q_r(n+1)-Q_r(n)=(B-1)\alpha_r\,n^{B-2}+O(n^{B-3}),
\]
by a one-term binomial expansion. This proves the claims.
\end{proof}

The previous lemma is crucial for the proof of Theorem~\ref{thm:main}. The next little lemma establishes that the polynomials $Q_r$ are not simple shifts of perfect powers.

\begin{lemma}\label{lem:no-shifted-powers}
Let $Q(x)\in\mathbb{Q}[x]$ be a polynomial of degree $d\ge 2$ and fix an integer $k\ge 2$.
Define
\[
T_k(Q)\ :=\ \Bigl\{\,t\in\mathbb{Q}\ :\ \exists\,a\in\mathbb{Q}^\times,\ R(x)\in\mathbb{Q}[x]\text{ with }\deg R\ge 1\text{ and }Q(x)-t \,=\, a\,R(x)^k\,\Bigr\}.
\]
Then $T_k(Q)$ is finite. In fact, we have that
\[
T_k(Q)\ \subseteq\ \bigl\{\,Q(\xi)\ :\ \xi\in\overline{\mathbb{Q}} \ \ and \ \ Q'(\xi)=0\,\bigr\},
\]
and so $|T_k(Q)|\le d-1$.
\end{lemma}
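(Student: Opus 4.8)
The plan is to prove the asserted containment $T_k(Q)\subseteq\{Q(\xi):\xi\in\overline{\mathbb{Q}},\ Q'(\xi)=0\}$ directly, since finiteness and the bound $|T_k(Q)|\le d-1$ both fall out of it immediately. First I would fix an arbitrary $t\in T_k(Q)$, so that by definition there exist $a\in\mathbb{Q}^\times$ and $R\in\mathbb{Q}[x]$ with $\deg R\ge 1$ and $Q(x)-t=a\,R(x)^k$. Because $\deg R\ge 1$, the polynomial $R$ has at least one root $\xi\in\overline{\mathbb{Q}}$; evaluating the identity at $\xi$ gives $Q(\xi)-t=a\,R(\xi)^k=0$, so that $t=Q(\xi)$. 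The only remaining thing to check is that any such $\xi$ is forced to be a critical point of $Q$.

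This is the heart of the matter, and it is where the hypothesis $k\ge 2$ is used. Differentiating the identity $Q(x)-t=a\,R(x)^k$ produces $Q'(x)=a\,k\,R(x)^{k-1}R'(x)$. Since $k-1\ge 1$, the factor $R(x)^{k-1}$ vanishes at every root of $R$, and in particular $Q'(\xi)=a\,k\,R(\xi)^{k-1}R'(\xi)=0$. Combining this with the previous paragraph, every $t\in T_k(Q)$ has the form $t=Q(\xi)$ with $Q'(\xi)=0$, which is exactly the claimed containment.

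The cardinality bound is then pure counting: as $\deg Q=d\ge 2$, the derivative $Q'$ is a nonzero polynomial of degree $d-1$, hence has at most $d-1$ distinct roots in $\overline{\mathbb{Q}}$, and applying $Q$ to these roots yields at most $d-1$ values. Thus $|T_k(Q)|\le d-1<+\infty$.

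I do not expect a genuine obstacle here; the argument is short and the subtleties are minor. The two points demanding care are that one must pass to a root $\xi$ of $R$ (available precisely because $\deg R\ge 1$) rather than arguing with $R$ itself, and that the differentiation step truly requires $k\ge 2$ so that the surviving factor $R^{k-1}$ is nonconstant. A degree comparison ($k\cdot\deg R=d$) confirms internal consistency but is not needed for the proof.
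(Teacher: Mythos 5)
Your proof is correct and follows essentially the same route as the paper: differentiate the identity $Q(x)-t=a\,R(x)^k$ to get $Q'(x)=a\,k\,R(x)^{k-1}R'(x)$, observe that any root $\xi$ of $R$ satisfies $Q(\xi)=t$ and $Q'(\xi)=0$, and then bound the number of critical values by $\deg Q'=d-1$. The only cosmetic difference is that you evaluate at an explicit root of $R$, whereas the paper phrases the same fact as $Q(x)-t$ and $Q'(x)$ sharing the nonconstant factor $R(x)^{k-1}$, hence $Q(x)-t$ having a multiple root.
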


\begin{proof}
Suppose $Q(x)-t=a\,R(x)^k$ with $a\in\mathbb{Q}^\times$, $\deg R\ge 1$, and $k\ge 2$.
Differentiating gives
\[
Q'(x)\ =\ a\,k\,R(x)^{k-1} R'(x).
\]
Hence $Q(x)-t$ and $Q'(x)$ have the common nonconstant factor $R(x)^{k-1}$, so $Q(x)-t$ has a multiple root.
Thus, there is $\xi\in\overline{\mathbb{Q}}$ with $Q(\xi)=t$ and $Q'(\xi)=0$ (i.e.\ $t$ is a critical value of $Q$).
Since $Q'$ has degree $d-1$, there are at most $d-1$ such values.
\end{proof}

\begin{corollary}\label{cor:bounded-t-finite}
For any fixed $X\ge 0$, we have
\[
\left | T_k(Q)\ \cap\ \{\,t\in\mathbb{Q}:\ |t|\le X \,\}\right | < +\infty.
\]
\end{corollary}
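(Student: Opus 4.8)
The plan is to deduce this immediately from Lemma~\ref{lem:no-shifted-powers}, which already does all of the work. That lemma proves the \emph{unconditional} finiteness of $T_k(Q)$ by showing that every $t\in T_k(Q)$ is a critical value of $Q$, i.e.\ $t=Q(\xi)$ for some $\xi\in\overline{\mathbb{Q}}$ with $Q'(\xi)=0$; since $\deg Q'=d-1$, this forces $|T_k(Q)|\le d-1$. I would then simply observe that intersecting a finite set with any subset of $\mathbb{Q}$ cannot increase its cardinality, so
\[
\bigl|\,T_k(Q)\cap\{t\in\mathbb{Q}:|t|\le X\}\,\bigr|\ \le\ |T_k(Q)|\ \le\ d-1\ <\ +\infty.
\]
That is the entire argument.

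Because of this, there is no genuine obstacle: the substance lives in the preceding lemma, and the corollary is a one-line consequence. The restriction to the bounded box $|t|\le X$ is immaterial for finiteness — it is presumably recorded in this truncated form only because that is the shape in which the statement will be invoked in the proof of Theorem~\ref{thm:main}, where the shifts $t$ produced by the condition $|p_B(n)-m^k|\le d$ are automatically bounded.

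If for some reason one wished to avoid appealing to the full critical-value classification, one could instead bound the heights of admissible $t$ directly: the condition $Q(x)-t=a\,R(x)^k$ forces $Q(x)-t$ to have a repeated root, hence $\disc_x\!\bigl(Q(x)-t\bigr)=0$, and the left-hand side is a nonzero polynomial in $t$ of bounded degree whose roots are exactly the critical values. This reproves finiteness (with the same bound $d-1$) but is strictly more work, so I would not take that route.
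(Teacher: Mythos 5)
Your proposal is correct and matches the paper exactly: the paper states this corollary without proof precisely because, as you observe, Lemma~\ref{lem:no-shifted-powers} already gives $|T_k(Q)|\le d-1$ unconditionally, so intersecting with the box $|t|\le X$ is trivially finite. Your reading of why the truncated form is recorded (it is the shape invoked in the proof of Theorem~\ref{thm:main}) is also accurate.
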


To prove Theorem~\ref{thm:main}, we require the following notions about polynomials
$Q\in\mathbb{Z}[x]$. We say $Q$ is \emph{generic (relative to $k\geq 2$)} if $Q(x)\neq a\,R(x)^k$ for every $a\in\mathbb{Q}^{\times}$ and $R\in\mathbb{Q}[x]$. Otherwise, we say that $Q$ is \emph{power-type (relative to $k$)}.
The next lemma is a Diophantine inputs. The first gives finiteness of perfect and near-perfect $k$th powers along any progression whose quasipolynomial piece is generic. The second lemma gives a pointwise lower bound in the power-type case.

\begin{lemma}\label{lem:generic-finiteness}
Let $Q(x)\in\mathbb{Q}[x]$ have degree $d\ge 2$, let $k\ge 3$ be an integer, and fix $D\in\mathbb{Z}_{\ge 0}$. Assume that for every $t\in\mathbb{Q}$ with $|t|\le D$ the polynomial $Q(x)-t$ is not of the form $a\cdot R(x)^k$ with $a\in\mathbb{Q}^\times$ and nonconstant $R\in\mathbb{Q}[x]$. Then we have
\[
\#\{(n,m)\in\mathbb{Z}_{\ge 0}\times\mathbb{Z}:\ |Q(n)-m^k|\le D\}<\infty.
\]
Moreover, except possibly in the pair $(d,k)=(2,2)$, this set is  bounded in terms of $Q,k,D$.
\end{lemma}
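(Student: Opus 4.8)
The plan is to reduce the statement $\#\{(n,m)\in\mathbb{Z}_{\ge 0}\times\mathbb{Z}:\ |Q(n)-m^k|\le D\}<\infty$ to a finite union of Diophantine problems, each of which is controlled by Siegel's Theorem on integral points. First I would observe that if $|Q(n)-m^k|\le D$, then writing $j:=Q(n)-m^k$ we have $|j|\le D$, so $j$ ranges over the finitely many integers in $[-D,D]$. Thus the set in question decomposes as a finite union, over $j$ with $|j|\le D$, of the solution sets to the exact equations
\[
Q(n)-j = m^k,
\]
and it suffices to show each such set is finite. So the entire problem reduces to showing: for each fixed integer $c$ (playing the role of $j$), the equation $Q(n)-c=m^k$ has only finitely many integer solutions $(n,m)$, \emph{under the hypothesis} that $Q(x)-t$ is never of the form $a\,R(x)^k$ for $|t|\le D$.

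Next I would attach to each such equation the affine plane curve $C_c:\ y^k = Q(x)-c$ over $\mathbb{Q}$, so that integer solutions $(n,m)$ of $Q(n)-c=m^k$ are precisely integral points on $C_c$. Siegel's Theorem guarantees finiteness of integral points on an affine curve of genus $\ge 1$ (and more generally on a curve whose smooth projective model has genus $\ge 1$, or which has at least three points at infinity in the genus $0$ case). The crux, therefore, is to verify that the hypotheses force the relevant curves to have positive genus. For the superelliptic curve $y^k = g(x)$, where $g(x):=Q(x)-c$ has degree $d$, the genus of the smooth model is positive whenever $g$ has \emph{at least two distinct roots} and $(d,k)$ is not one of the small degenerate cases (concretely, the genus is positive except when $g$ has the form forcing $y^k=g(x)$ to be rational, i.e.\ essentially $k=2$ with $\deg g\le 2$, or $g$ being a perfect $k$th power up to a constant). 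Here the genericity hypothesis does exactly the work we need: the hypothesis of Lemma~\ref{lem:no-shifted-powers} / Lemma~\ref{lem:generic-finiteness} says precisely that $Q(x)-c=g(x)$ is \emph{not} of the form $a\,R(x)^k$, which prevents the curve from degenerating to the rational case where $y^k=a\,R(x)^k$ admits the parametrization $y=a^{1/k}R(x)$.

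The hard part will be checking the positive-genus condition uniformly and handling the exceptional case $(d,k)=(2,2)$, which is exactly the case the statement excludes from the quantitative ``bounded in terms of $Q,k,D$'' claim. The standard genus formula for the superelliptic curve $y^k=g(x)$ depends on the multiplicities of the roots of $g$ modulo $k$ and on $\gcd$ conditions between $k$, $d$, and the root multiplicities; I would invoke the Riemann--Hurwitz formula applied to the degree-$k$ map $(x,y)\mapsto x$, computing ramification over the roots of $g$ and over the point at infinity. Genus $0$ occurs only in a short explicit list: $y^2=(\text{linear or quadratic})$, $y^k=(\text{linear})$, and $y^k=a\,R(x)^k$; the genericity hypothesis rules out the last, $d\ge 2$ rules out the linear cases when $k\ge 3$, and the sole residual genus-$0$ possibility is $k=2$, $d=2$, giving a conic with potentially infinitely many integral points (a Pell-type phenomenon, as the $B=3$ example illustrates). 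Since we are given $k\ge 3$ throughout Theorem~\ref{thm:main} and here $k\ge 3$ in the lemma's main application, the conic case never arises, so every $C_c$ has genus $\ge 1$ and Siegel applies. Finally, I would note that Siegel's Theorem in its effective-free form yields finiteness but not an explicit bound; the ``bounded in terms of $Q,k,D$'' refinement (outside $(d,k)=(2,2)$) would instead follow from the effective results of Baker--Coates and later refinements on integral points on superelliptic curves, giving explicit height bounds on the solutions once genus $\ge 1$ is confirmed.
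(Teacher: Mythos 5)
There is a genuine gap at the crux of your argument. Your plan rests on the claim that, once the hypothesis excludes $Q(x)-c=a\,R(x)^k$, the only genus-zero possibilities for $y^k=Q(x)-c$ are the linear and conic cases, so that ``every $C_c$ has genus $\ge 1$ and Siegel applies.'' That classification is false: the genus of $y^k=g(x)$ is governed by the multiplicities of the roots of $g$ modulo $k$, and there are multiplicity patterns that are neither constant multiples of $k$th powers nor linear/conic, yet give \emph{rational} curves with infinitely many integral points. Concretely, take $k=3$ and $g(x)=(x-1)^3(x-2)$. Then $g$ is not of the form $a\,R(x)^3$ (the root $x=2$ is simple), $\deg g=4\ge 2$, and $k\ge 3$, so $g$ passes every filter in your list; nevertheless $y^3=(x-1)^3(x-2)$ is rational and carries the infinite family of integer points
\[
x=s^3+2,\qquad y=s\,(s^3+1),\qquad s\in\mathbb{Z},
\]
since $(x-1)^3(x-2)=(s^3+1)^3s^3$. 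The hypothesis of the lemma only forbids \emph{all} root multiplicities of $Q(x)-c$ from being divisible by $k$; it does not prevent configurations like the one above, so no amount of care with Riemann--Hurwitz can close the hole --- the obstruction is that the genericity hypothesis is too weak to force positive genus, not that a computation is missing. (Note this example satisfies the lemma's stated hypotheses with $D=0$ and contradicts its conclusion, so any viable argument must engage the repeated-root configurations by some tool other than genus plus Siegel; the same defect undercuts your closing appeal to effective Baker--Coates bounds, which also require conditions on root multiplicities.)

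This is exactly where the paper's proof takes a structurally different route. After clearing denominators --- and note, as a separate small slip, that since $Q\in\mathbb{Q}[x]$ your quantity $j=Q(n)-m^k$ is a priori a rational number with bounded denominator rather than an integer, so one must first pass to $Q_e=MQ\in\mathbb{Z}[x]$ as the paper does --- the paper stratifies the shifted polynomials $Q_e(X)-t$ by their number $r_t$ of \emph{distinct} roots. Siegel's theorem via genus $\ge 1$ is invoked only when $r_t\ge 3$; the cases $r_t=1$ and $r_t=2$, where the genus can collapse, are instead handled by finiteness theorems for Thue/Thue--Mahler-type equations, and the genericity hypothesis is used only in the $r_t=1$ case, to rule out $b_0Y^k=c(X-a)^d$ with $k\mid d$ and $c/b_0$ a $k$th power. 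Whether every one of those degenerate subcases is airtight in the paper is a separate question (they are delicate; my example above has $r_t=2$ with multiplicities $(3,1)$, a shape the paper's $r_t=2$ case does not literally address), but the structural lesson stands: a proof of this lemma cannot run entirely through ``genus $\ge 1$ plus Siegel,'' and your proposal has no mechanism for the repeated-root cases.
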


\begin{proof}
Write $Q=\frac{1}{M}\,Q_e$ with $M\in\mathbb{Z}_{\ge 1}$ and $Q_e\in\mathbb{Z}[x]$ primitive. Then
\[
|Q(n)-m^k|\le D
\quad\Longleftrightarrow\quad
\big|\,Q_e(n)-M\,m^k\,\big|\le MD.
\]
Hence, it suffices to prove finiteness, for each fixed integer $t$ with $|t|\le MD$, of the Diophantine equation
\begin{equation}\label{eq:mainB}
M\,m^k \;=\; Q_e(n)-t.
\end{equation}
Fix such a $t$. Factor $M$ as $M=u^k\,b_0$ with $u\in\mathbb{Z}_{\ge 1}$ and $b_0\in\mathbb{Z}_{\ge 1}$ $k$-th-power-free. Then every integer solution $(n,m)$ to \eqref{eq:mainB} yields an \emph{integral} solution $(X,Y)=(n,\,u\,m)$ of the superelliptic equation
\begin{equation}\label{eq:curve}
b_0\,Y^k \;=\; Q_e(X)-t.
\end{equation}
Conversely, any integral solution $(X,Y)$ to \eqref{eq:curve} with $u\mid Y$ corresponds to a solution $(n,m)=(X,\,Y/u)$ of \eqref{eq:mainB}. Therefore, the set of solutions to \eqref{eq:mainB} is finite if and only if the set of integral points $(X,Y)\in\mathbb{Z}^2$ on \eqref{eq:curve} is finite.

Set $d=\deg Q_e=\deg Q\ge 2$ and, for each fixed $t\in\mathbb{Z}$, define
\[
r_t \ :=\ \#\{\,x\in \overline{\mathbb{Q}}:\ Q_e(x)=t\,\}
\]
(counted \emph{without} multiplicity). Note that the set of $t$ with $r_t<d$ is finite (the critical values of $Q_e$).

\smallskip
\noindent
{\bf Case $r_t=1$.} If $Q_e(X)-t=c\,(X-a)^d$ with $c\in\mathbb{Z}\setminus\{0\}$ and $d=\deg Q_e\ge 2$, 
then \eqref{eq:curve} becomes
\[
b_0\,Y^k \;=\; c\,(X-a)^d.
\]
Write $u=\gcd(k,d)$ and $k=uk_1$, $d=ud_1$ with $\gcd(k_1,d_1)=1$. 
If $k\mid d$ and $c/b_0$ is a $k$th power, then $Q_e(X)-t$ would be a constant times a $k$th power, 
contrary to the hypothesis. Otherwise, by standard finiteness results for Thue/Thue-Mahler type equations 
(for example, see Chapter 9 of \cite{EG2015}), this Diophantine equation has only finitely many integer solutions.

\smallskip
\noindent{\bf Case $r_t\ge 3$.}
Then the smooth projective model of \eqref{eq:curve} has genus $g\ge 1$ (see, e.g., the standard genus formula for superelliptic curves in \S IV of \cite{Griffiths}). By Siegel's theorem (see Chapter 8 of \cite{Lang}), \eqref{eq:curve} has only finitely many integral points.

\smallskip
\noindent{\bf Case $r_t=2$.}
Write $Q_e(X)-t=c(X-a)^2(X-b)$ with $a\neq b$.
If $k\ge 3$, \eqref{eq:curve} is a Thue/Thue-Mahler-type equation, and standard results 
(for example, see Chapter 9 of \cite{EG2015})  imply finiteness of integral solutions.

\smallskip
\noindent{\bf Case $(d,k)=(2,2)$.}
This is the classical conic case, which may have infinitely many integral points (Pell-type).

When $g\ge 1$, Siegel's theorem on integral points (for example, see Chapter 8 of \cite{Lang})  implies that the set of integral solutions $(X,Y)\in\mathbb{Z}^2$ to \eqref{eq:curve} is finite. Because there are only finitely many $t$ with $|t|\le MD$, taking the union over these $t$ shows that there are only finitely many $(n,m)$ with $|Q(n)-m^k|\le D$.
\end{proof}

\subsection{Proof of Theorem~\ref{thm:main}}\label{sec:2.2}

By Lemma~\ref{lem:eqp}, there is a period $L$ and polynomials $Q_0,\dots,Q_{L-1}\in\mathbb{Q}[X]$ of
degree $B-1$ with positive leading coefficients such that $p_B(Ln+r)=Q_r(n)$ for $0\le r<L$.
It suffices to prove that for each fixed residue class $r$ and each fixed $d\ge 0$,
the set
\[
\mathcal{S}_{r,d}\ :=\ \bigl\{(n,m)\in\mathbb{Z}_{\ge 0}\times\mathbb{Z}:\ |Q_r(n)-m^k|\le d\bigr\}
\]
is finite; then a finite union over $r$ gives (ii), and taking $d=0$ gives (i).

Fix $r$ and $d\ge 0$. For each integer $t$ with $|t|\le d$ consider the Diophantine equation
\begin{equation}\label{eq:level-t}
Q_r(n)-m^k \;=\; t, \qquad n\in\mathbb{Z}_{\ge 0},\ m\in\mathbb{Z}.
\end{equation}
Let
\[
T_r(d)\ :=\ \Bigl\{\,t\in\mathbb{Z}:\ |t|\le d\ \text{ and }\ \exists\,a\in\mathbb{Q}^\times,\ R\in\mathbb{Q}[X]
\text{ nonconstant with }Q_r(X)-t=a\,R(X)^k\,\Bigr\}.
\]
By Lemma~\ref{lem:no-shifted-powers} and Corollary~\ref{cor:bounded-t-finite}, $T_r(d)$ is finite.

\smallskip
\noindent
\emph{Case of Non-exceptional shifts.}
For every $t\in\{-d,-d+1,\dots,d\}\setminus T_r(d)$, the polynomial $Q_r(X)-t$ is \emph{not}
a constant times a $k$th power. Choose $b_0\in\mathbb{Z}_{>0}$ so that $Q_e(X):=b_0 Q_r(X)\in\mathbb{Z}[X]$.
Then any solution of \eqref{eq:level-t} gives an integer solution of
\[
b_0\,m^k\ =\ Q_e(n)-b_0 t.
\]
By Lemma~\ref{lem:generic-finiteness} (applied with the fixed polynomial $Q_e(X)-b_0 t$), there are only finitely many such
integer solutions. Hence $\#\{(n,m): Q_r(n)-m^k=t\}<\infty$ for all non-exceptional $t$ with $|t|\le d$.

\smallskip
\noindent
\emph{Case of exceptional shifts.}
Now fix $t\in T_r(d)$. As above, pass to $Q_e(X)=b_0 Q_r(X)\in\mathbb{Z}[X]$ and consider
\[
b_0\,m^k\ =\ Q_e(n)-b_0 t.
\]
Let $r_t$ denote the number of distinct roots of $Q_e(X)-b_0 t$ in $\overline{\mathbb{Q}}$.

\quad$\bullet$ If $r_t\ge 3$, then the affine curve $b_0Y^k=Q_e(X)-b_0 t$ has genus $\ge 1$,
so by Siegel's theorem (for example, see Chapter 8 of \cite{Lang}) there are only finitely many integer points.

\quad$\bullet$ If $r_t=2$, standard Thue-Mahler finiteness applies (for example, see Chapter 9 of \cite{EG2015})
so there are only finitely many integer solutions.

\quad$\bullet$ If $r_t=1$, then $Q_e(X)-b_0 t=c\,(X-a)^d$ for some $a,c\in\mathbb{Z}$, $c\ne 0$, with
$d=\deg Q_e=B-1\ge 3$. Dividing by $b_0$ gives
\[
Q_r(X)-t \;=\; \frac{c}{b_0}\,(X-a)^d.
\]
Since $k\nmid d=B-1$ by hypothesis, the Diophantine equation $b_0\,m^k=c\,(n-a)^d$ is of Thue/Thue-Mahler type with coprime exponents and therefore has only finitely many integer solutions (for example, see Chapter 9 of \cite{EG2015}). Therefore, the case $r_t=1$ also yields at most finitely many solutions.

\smallskip
Combining the three subcases, for each $t\in T_r(d)$ the equation \eqref{eq:level-t} has only finitely
many integer solutions. Therefore $\mathcal{S}_{r,d}$ is the finite union, over the finitely many
$t\in\{-d,\dots,d\}$, of finite sets. This proves that $\mathcal{S}_{r,d}$ is finite. As noted at the
start, summing over $r$ establishes \textup{(ii)}, and taking $d=0$ gives \textup{(i)}. $\ \square$

\end{document}